\documentclass[10pt]{article}
\usepackage[english]{babel}
\usepackage[margin=1.5in]{geometry}
\usepackage{amsmath, amsfonts, amssymb, amsthm}
\usepackage[hidelinks]{hyperref}
\usepackage[scr=boondoxo,cal=euler]{mathalfa}
\usepackage{tikz-cd}
\usepackage{pxfonts}
\usepackage{indentfirst}
\usepackage{fancyhdr}
\usepackage{graphicx}
\usepackage{enumitem}
\usepackage{microtype}
\usepackage{pdfpages}
\usepackage{centernot}
\usepackage{ragged2e}
\setlength{\RaggedRightParindent}{\parindent}
\usepackage{mathabx}
\allowdisplaybreaks

\linespread{1.07}
\usetikzlibrary{decorations.pathmorphing}

\theoremstyle{plain}
\newtheorem{thm}{Theorem}[section]
\newtheorem*{thm*}{Theorem}
\newtheorem{lem}[thm]{Lemma}

\newtheorem*{prop*}{Proposition}

\theoremstyle{definition}

\newtheorem*{df*}{Definition}
\newtheorem*{dfs*}{Definitions}

\newtheorem*{exercise*}{Exercise}

\theoremstyle{remark}
\newtheorem{rem}[thm]{Remark}
\newtheorem*{rem*}{Remark}

\newtheorem*{example}{Example}
\newtheorem*{examples}{Examples}

\newcommand{\sr}[1]{\mathscr{#1}}
\newcommand{\bo}[1]{\boldsymbol{#1}}

\newcommand{\Z}{\mathbf{Z}}

\newcommand{\F}{\mathbf{F}}

\newcommand{\x}{\times}

\newcommand{\pt}{\mathrm{pt}}

\newcommand{\CFhat}{\smash{\widehat{\mathrm{CF}}}}
\newcommand{\HFhat}{\smash{\widehat{\mathrm{HF}}}}
\newcommand{\HFLhat}{\smash{\widehat{\mathrm{HFL}}}}

\DeclareMathOperator{\Id}{Id}

\DeclareMathOperator{\SFH}{SFH}
\DeclareMathOperator{\SFC}{SFC}

\usepackage{etoolbox}
\patchcmd{\thmhead}{(#3)}{#3}{}{}

\newcommand\nomarkerfootnote[1]{%
  \begingroup
  \renewcommand\thefootnote{}\footnote{#1}%
  \addtocounter{footnote}{-1}%
  \endgroup
}

\title{Link Floer homology also detects split links}
\author{Joshua Wang}
\date{}

\begin{document}
\maketitle

\begin{abstract}
	Inspired by work of Lipshitz-Sarkar, we show that the module structure on link Floer homology detects split links. 
	Using results of Ni, Alishahi-Lipshitz, and Lipshitz-Sarkar, we establish an analogous detection result for sutured Floer homology. 
	\nomarkerfootnote{2010 \textit{Mathematics Subject Classification} 57M27}
\end{abstract}

\section{Introduction}

A remarkable feature of modern homology theories in low-dimensional topology is their ability to detect many topological properties of interest. We refer to the introduction of \cite{lipshitz2019khovanov} for a list of such detection results in Khovanov homology and Heegaard Floer homology. The main theorem of \cite{lipshitz2019khovanov} is an additional detection result for Khovanov homology: that the module structure on Khovanov homology detects split links. In this short paper, we add one more item to the list: that the analogous module structure on link Floer homology also detects split links. 

If $L$ is a two-component link in $S^3$, then its link Floer homology $\HFLhat(L)$ \cite{MR2443092}, which takes the form of a finite-dimensional vector space over $\F_2 = \Z/2$, is naturally equipped with an endomorphism $X$ satisfying $X\circ X = 0$. Such an endomorphism gives $\HFLhat(L)$ the structure of a module over $\F_2[X]/X^2$. The map $X$ is defined to be the homological action \cite{MR3294567} of a generator of the first relative homology group of the exterior of $L$. We review the definition of this action in Section~\ref{sec:prelim}. 

\begin{thm}\label{thm:splitLinkDetection}
	Let $L$ be a two-component link in $S^3$. Then $L$ is split if and only if $\HFLhat(L)$ is a free $\F_2[X]/X^2$-module.
\end{thm}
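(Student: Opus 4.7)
The plan is to prove both directions separately, following the strategy of Lipshitz-Sarkar for Khovanov homology.

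For the ``only if'' direction, I would apply the K\"unneth formula for link Floer homology of split links: if $L = L_1 \sqcup L_2$ is split by a 2-sphere $S$, then
\[
\HFLhat(L) \;\cong\; \HFLhat(L_1) \otimes \HFLhat(L_2) \otimes V,
\]
where $V \cong \F_2^2$ is the two-dimensional factor associated with the extra pair of basepoints used to combine multi-pointed Heegaard diagrams across the separating tube. Choosing an embedded arc $\gamma$ representing the generator of $H_1(S^3\setminus L,\partial)$ that crosses $S$ transversely once, one checks directly from the chain-level formula for the homological action that $X$ acts as the identity on the $\HFLhat(L_i)$ tensor factors and as $v_+ \mapsto v_-$, $v_- \mapsto 0$ on a preferred basis of $V$. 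This exhibits $V$, and hence $\HFLhat(L)$, as a free $\F_2[X]/X^2$-module.

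For the ``if'' direction, I would prove the contrapositive: if $L$ is non-split, then $\HFLhat(L)$ is not free, equivalently $\ker X/\Image X \neq 0$. Following Lipshitz-Sarkar, the plan is to identify the $X$-homology $\ker X/\Image X$ with the sutured Floer homology of an auxiliary sutured manifold obtained from the link exterior $S^3\setminus N(L)$ by a surface decomposition along a properly embedded disc dual to $\gamma$. When $L$ is non-split, this decomposed sutured manifold is irreducible and taut, so Juh\'asz's non-vanishing theorem yields the desired non-vanishing of $\ker X/\Image X$, giving non-freeness of $\HFLhat(L)$ as a module.

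The main obstacle will be the chain-level identification of $\ker X/\Image X$ with the auxiliary sutured invariant. This requires reconciling the homological $X$-action in the sense of Ni with the basepoint-moving actions of Alishahi-Lipshitz on the link Floer chain complex, and then running a Floer-theoretic surface-decomposition argument compatible with the resulting chain model. Once this identification is in place, the detection statement reduces to the well-understood non-vanishing behavior of sutured Floer homology for taut, irreducible sutured manifolds, which is precisely the regime forced by non-splitness of $L$.
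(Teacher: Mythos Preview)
Your ``only if'' direction is essentially the paper's argument: the extra factor $V$ is exactly $\SFH(S^3(2))$ in the paper's connected-sum formula, and the free $\F_2[X]/X^2$-structure on $V$ propagates to $\HFLhat(L)$ by the tensor-product lemma.

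Your ``if'' direction, however, has a genuine gap. You propose to decompose the link exterior along ``a properly embedded disc dual to $\gamma$,'' but for a non-split two-component link no such disc exists. A properly embedded disc $D$ in $S^3\setminus N(L)$ has boundary on the torus boundary; if $\partial D$ is essential it is a compressing disc (forcing a component to be unknotted, and even then $D$ does not meet an arc joining the two components), and if $\partial D$ is inessential then $D$ is boundary-parallel. In neither case do you get a surface whose algebraic intersection with the arc $\gamma$ is $1$. More broadly, the identification of $\ker X/\Image X$ with the sutured Floer homology of a single decomposed piece is not a known statement, and the obstacle you flag is not merely technical: Ni's surface-decomposition theorem produces a \emph{direct summand}, not the $X$-homology.

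The paper's route avoids all of this. When $L$ is non-split, the sutured exterior $S^3(L)$ is taut, so by Gabai--Juh\'asz it admits a hierarchy of nice surface decompositions terminating in a product sutured manifold $(N,\beta)$. Ni's theorem says each decomposition exhibits the new $\SFH$ as an $\F_2[X]/X^2$-module \emph{direct summand} of the old one, so $\SFH(N,\beta)$ is a summand of $\HFLhat(L)$. But $\dim_{\F_2}\SFH(N,\beta)=1$, which is odd, so this summand cannot be free; hence $\HFLhat(L)$ is not free. No identification of $\ker X/\Image X$, no Alishahi--Lipshitz input, and no auxiliary surface are needed---only the observation that a free $\F_2[X]/X^2$-module cannot have a one-dimensional summand.
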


\begin{rem}
	Before the work of this paper began, Tye Lidman observed and informed the author that link Floer homology should detect split links in this way using results of \cite{MR3044606} and arguments similar to those in \cite{lipshitz2019khovanov}. The proof of Theorem~\ref{thm:splitLinkDetection} appearing in this short paper is due to the author and is independent of the results and arguments in \cite{MR3044606} and \cite{lipshitz2019khovanov}. In particular, it uses sutured manifold hierarchies and does not require citing any deep results in symplectic geometry. However, the proof of Theorem~\ref{thm:mainTheorem} appearing here does use such a citation. 
\end{rem}

\begin{rem}
	Link Floer homology detects the Thurston norm of its exterior \cite[Theorem 1.1]{MR2393424}, \cite[Theorem 1.1]{MR2546619}. This by itself does not imply that link Floer homology detects split links. For example, the exterior of the Whitehead link has the same Thurston norm as the exterior of a split union of two genus $1$ knots. 
\end{rem}

More generally, there is a homological action $X_\zeta$ \cite{MR3294567} on the sutured Floer homology $\SFH(M,\gamma)$ \cite{Juh06} of a balanced sutured manifold $(M,\gamma)$ satisfying $X_\zeta \circ X_\zeta = 0$ for each $\zeta \in H_1(M,\partial M)$. 
We prove the following generalization of Theorem~\ref{thm:splitLinkDetection} using a result of Lipshitz-Sarkar \cite{lipshitz2019khovanov}, which builds on results of Alishahi-Lipshitz \cite{MR4010864} and Ni \cite{MR3044606}. 

\begin{thm}\label{thm:mainTheorem}
	Let $(M,\gamma)$ be a balanced sutured manifold, let $\zeta \in H_1(M,\partial M)$, and assume that $\SFH(M,\gamma) \neq 0$. Then $\SFH(M,\gamma)$ is a free $\F_2[X]/X^2$-module with respect to the homological action of $\zeta$ if and only if there is an embedded $2$-sphere $S$ in $M$ for which the algebraic intersection number $S\cdot\zeta$ is odd.
\end{thm}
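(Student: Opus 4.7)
The plan is to prove the two directions separately, with the $(\Leftarrow)$ direction following from a connect-sum decomposition of sutured Floer homology, and the $(\Rightarrow)$ direction reducing to the cited Lipshitz--Sarkar theorem.

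For $(\Leftarrow)$: assume $S \subset M$ is an embedded $2$-sphere with $S\cdot\zeta$ odd. Cut $M$ along $S$ and cap the resulting sphere boundary component(s) with $3$-balls to obtain a balanced sutured manifold $(M',\gamma)$ together with a connect-sum description $M \cong M' \# N$, where $N$ is $S^1 \times S^2$ if $S$ is non-separating and a closed $3$-manifold if $S$ is separating. A Juhász-type connect-sum formula then gives an isomorphism of $\F_2[X]/X^2$-modules
\[
(\SFH(M,\gamma), X_\zeta) \cong (\SFH(M',\gamma), X_{\zeta'}) \otimes_{\F_2} (W, X_\alpha),
\]
where the second factor records the contribution of $N$ together with the piece of $\zeta$ transverse to $S$. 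The odd-intersection hypothesis forces $(W,X_\alpha) \cong (\F_2[X]/X^2, \text{multiplication by } X)$, a free rank-one module whose $X_\alpha$-homology vanishes. By the Künneth formula the homology of the whole tensor product vanishes, so $(\SFH(M,\gamma), X_\zeta)$ is free.

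For $(\Rightarrow)$, I argue the contrapositive: assume no embedded $2$-sphere in $M$ has odd intersection with $\zeta$, and show $\SFH(M,\gamma)$ is not free. Take a prime decomposition $M \cong M_0 \#^k (S^1 \times S^2)$ with $M_0$ irreducible. The hypothesis forces the mod-$2$ class of $\zeta$ to lie in the image of $H_1(M_0,\partial M_0) \to H_1(M,\partial M)$, because in $\F_2$-coefficients the only non-null-homologous spheres in $M$ are combinations of the evident non-separating spheres in the $(S^1\times S^2)$-summands. Writing $\zeta = \iota_*\zeta_0$ and applying the connect-sum formula and Künneth again reduces freeness of $(\SFH(M,\gamma), X_\zeta)$ to freeness of $(\SFH(M_0,\gamma), X_{\zeta_0})$; moreover $\SFH(M_0,\gamma) \neq 0$ since $\SFH(M,\gamma) \neq 0$. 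Now apply the cited Lipshitz--Sarkar theorem, in its sutured formulation built from Alishahi--Lipshitz-type torsion-order bounds and Ni's detection theorems, which rules out freeness of $(\SFH(M_0,\gamma),X_{\zeta_0})$ in precisely this situation (irreducible $M_0$ with non-zero $\SFH$). This contradicts freeness of $(\SFH(M,\gamma),X_\zeta)$.

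The principal obstacle is the final invocation. Lipshitz--Sarkar's rigidity result is stated for Khovanov homology of prime links, and obtaining a sutured analog requires re-running their spectral-sequence / torsion-order argument in the Heegaard Floer setting, or else extracting it from the link-theoretic version through an explicit embedding of the sutured problem into a link exterior. A secondary technical point is the compatibility of the connect-sum formula with the homological action: the isomorphism of sutured Floer groups must respect the $\F_2[X]/X^2$-module structure on the nose for the Künneth reductions above to apply, and this should follow from a direct diagrammatic inspection of how the defining chain map of $X_\zeta$ behaves under the Heegaard stabilization corresponding to a connect sum.
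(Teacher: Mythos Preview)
Your overall architecture---connect-sum/K\"unneth reduction to irreducible pieces---matches the paper's, but the resolution of what you flag as the ``principal obstacle'' is precisely where the paper departs from your outline, and it is the paper's main idea. You reduce to a single irreducible $M_0$ and then wish for a sutured analog of Lipshitz--Sarkar. The paper instead separates the prime summands into irreducible pieces $N_1,\ldots,N_k$ \emph{with nonempty boundary} and irreducible \emph{closed} pieces $Y_1,\ldots,Y_\ell$ (plus the $S^1\times S^2$ factors). For the closed $Y_j$ one only needs the Lipshitz--Sarkar statement for $\HFhat$ of a closed irreducible $3$-manifold, which is already available. For the $N_i$, the paper does not appeal to Lipshitz--Sarkar at all: since $\SFH(N_i,\beta_i)\neq 0$ and $N_i$ is irreducible, $(N_i,\beta_i)$ is taut (Juh\'asz), so a nice sutured hierarchy terminates in a product sutured manifold with one-dimensional $\SFH$. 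By Ni's compatibility of the homological action with nice surface decompositions, this one-dimensional $\F_2$ sits as an $\F_2[X]/X^2$-module summand of $\SFH(N_i,\beta_i)$, which therefore cannot be free. This hierarchy argument is the paper's Lemma~3.1 and is exactly what renders a sutured Lipshitz--Sarkar unnecessary.

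Two secondary gaps. First, the prime decomposition of a compact $3$-manifold is not $M_0\,\#^k(S^1\times S^2)$ with a single irreducible $M_0$; there may be many irreducible summands, some carrying boundary and some closed---hence the need for the two distinct arguments above. Second, to guarantee that each bounded summand $(N_i,\beta_i)$ is itself balanced (so that $\SFH$ is defined and Juh\'asz's tautness criterion applies), the paper first reduces from balanced to \emph{strongly} balanced via contact $1$-handle attachments, which are inverse to product-disc decompositions and hence preserve $\SFH$ as an $\F_2[X]/X^2$-module; your outline omits this step. Finally, in your $(\Leftarrow)$ direction, when $S$ separates both sides may carry pieces of $\partial M$, so you do not in general get $M'\# N$ with $N$ closed; the paper handles this case via its Lemma~2.4, which produces an extra tensor factor $\SFH(S^3(2))\cong\F_2[X]/X^2$ playing the role of your $(W,X_\alpha)$.
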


\begin{rem}
	Let $L$ be a link in $S^3$ with at least two components, and let $C_0,C_1$ be two distinct components of $L$. Let $S^3(L)$ denote the sutured exterior of $L$, and let $\zeta \in H_1(S^3(L),\partial S^3(L))$ be the relative homology class of a path from $C_0$ and $C_1$ in $S^3(L)$. The sutured Floer homology of $S^3(L)$ can be identified with $\HFLhat(L)$ by \cite[Proposition 9.2]{Juh06}. 
	By Theorem~\ref{thm:mainTheorem}, $\HFLhat(L)$ is a free $\F_2[X]/X^2$-module with respect to the homological action of $\zeta$ if and only if there exists an embedded $2$-sphere in the complement of $L$ which separates $C_0$ from $C_1$. 
\end{rem}

Homological actions on Heegaard Floer homology for closed oriented $3$-manifolds were originally defined in \cite[Section 4.2.5]{MR2113019}. These homological actions on Heegaard Floer homology and the closely-related construction of twisted Heegaard Floer homology are well-studied. See \cite{MR2543922,MR2653731,MR3044606,MR3190305,MR4010864,lipshitz2019khovanov,hom2020dehn} for further connections to non-separating spheres and to the module structure on Khovanov homology. 

\theoremstyle{definition}
\newtheorem*{ack}{Acknowledgments}
\begin{ack}
	I thank Tye Lidman, Robert Lipshitz, and Maggie Miller for helpful discussions. I also thank my advisor Peter Kronheimer for his continued guidance, support, and encouragement. This material is based upon work supported by the NSF GRFP through grant DGE-1745303.
\end{ack}

\section{Preliminaries}\label{sec:prelim}

See \cite{Gab83,Gab87a,Juh06,Juh08,MR2653728} for the definitions of balanced sutured manifolds, nice surface decompositions, and sutured Floer homology. In this paper, we use sutured Floer homology with coefficients in the field $\F_2 = \Z/2$. We first provide some examples of sutured manifolds which also serve to set notation. 

\begin{examples}
	A \textit{product sutured manifold} is a sutured manifold of the form $([-1,1] \x \Sigma, [-1,1] \x \partial \Sigma)$ where $\Sigma$ is a compact oriented surface. It is balanced if $\Sigma$ has no closed components.

	Let $L$ be a link in a closed oriented connected $3$-manifold $Y$. The \textit{sutured exterior} $Y(L)$ of the link is the balanced sutured manifold obtained from $Y$ by deleting a regular neighborhood of $L$ and adding two oppositely oriented meridional sutures on each boundary component. 

	If $Y$ is a closed oriented connected $3$-manifold, then let $Y(n)$ be the balanced sutured manifold obtained by deleting $n$ disjoint open balls from $Y$ and adding a suture to each boundary component. We similarly define $(M,\gamma)(n)$ when $(M,\gamma)$ is a connected balanced sutured manifold. 
\end{examples}

\begin{df*}[{\cite[Definition 2.10]{Gab83}}]
	A sutured manifold $(M,\gamma)$ is \textit{taut} if $M$ is irreducible and $R(\gamma)$ is norm-minimizing in $H_2(M,\gamma)$. 
\end{df*}

\begin{rem}
	Balanced product sutured manifolds are taut. If $L$ is a two-component link in $S^3$, then $S^3(L)$ is taut if and only if $L$ is not split. Except for $S^3(1)$, any balanced sutured manifold of the form $Y(n)$ or $(M,\gamma)(n)$ for $n \ge 1$ is not irreducible and therefore is not taut.
\end{rem}

\begin{thm}[{\cite[Theorem 4.2]{Gab83}, \cite[Theorem 8.2]{Juh08}}]\label{thm:niceHierarchy}
	If $(M,\gamma)$ is a taut balanced sutured manifold, then there is a sequence of nice surface decompositions \[
 		(M,\gamma) \overset{S_1}{\rightsquigarrow} (M_1,\gamma_1) \overset{S_2}{\rightsquigarrow} \cdots \overset{S_n}{\rightsquigarrow} (M_n,\gamma_n)
 	\]where $(M_n,\gamma_n)$ is a balanced product sutured manifold. 
\end{thm}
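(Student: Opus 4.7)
The plan is to combine two main ingredients: Gabai's construction of a sutured manifold hierarchy terminating in a product, and Juhász's refinement showing that each decomposing surface in such a hierarchy can be made nice. I would treat these in that order, because Juhász's nice-surface modifications are local in nature and preserve the inductive data that powers Gabai's termination argument.

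First, I would build a Gabai-style hierarchy. The starting observation is that, in a taut balanced sutured manifold $(M,\gamma)$, whenever the Thurston norm on $H_2(M,\gamma)$ is nonzero one can find a properly embedded, norm-minimizing surface $S$ representing a nontrivial homology class; by the standard adjustments (boundary-compressing toward the sutures and straightening outermost disks), $S$ can be arranged so that the decomposed manifold $(M',\gamma')$ is again taut. This gives a strict decrease in a complexity invariant built from the Thurston norm, the number of components of $R(\gamma)$, and related combinatorial data, so Haken's finiteness theorem forces termination. When the hierarchy halts, the resulting taut balanced sutured manifold has vanishing Thurston norm on $H_2$, which by classification of such minimal pieces forces it to be a product $([-1,1]\times \Sigma,[-1,1]\times\partial\Sigma)$.

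Next, I would upgrade each $S_i$ to be nice in the sense of Juhász, meaning that $\partial S_i$ meets the sutures transversely in a specific combinatorial pattern compatible with producing a Heegaard diagram for the decomposed manifold. Given any decomposing surface, one can perform boundary isotopies, small finger moves across the sutures, and local replacements of disk components to realize the nice condition on $\partial S_i$ while leaving the isotopy class of $S_i$ in the interior unchanged. Crucially, these modifications preserve both the sutured manifold that results from decomposition and its tautness, so they can be applied at each level of the hierarchy independently without disturbing termination.

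The main obstacle I anticipate is the joint control: ensuring that the nice-making modifications at step $i$ do not force the hierarchy to grow in complexity at later steps. The way around this is to notice that niceness is really a condition on $\partial S_i \cap \partial M_{i-1}$ together with a small collar, and the complexity that drives Gabai's termination argument depends only on interior data (norm, Euler characteristic, sutured topology of the result), which is invariant under the local modifications. So one can run Gabai's construction first and then apply Juhász's niceness adjustment level by level, obtaining a hierarchy of nice decompositions whose final piece is a balanced product sutured manifold.
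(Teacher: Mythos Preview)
The paper does not give its own proof of this theorem; it is stated purely as a citation to Gabai~\cite[Theorem 4.2]{Gab83} and Juh\'asz~\cite[Theorem 8.2]{Juh08}, and is used as a black box in the proof of Lemma~\ref{lem:tautImpliesNotFree}. So there is no argument in the paper to compare your proposal against.

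That said, your outline does track the overall logic of those cited results: Gabai produces a taut hierarchy terminating in a product, and Juh\'asz shows the decomposing surfaces can be taken nice. A couple of points are looser than a real proof would allow. Your description of Gabai's termination---``vanishing Thurston norm on $H_2$ forces the terminal piece to be a product''---is not quite the mechanism; Gabai's complexity is more delicate, and the terminal characterization uses that $H_2(M_n,\partial M_n)=0$ together with tautness, not just vanishing norm. Likewise, ``nice'' in Juh\'asz's sense is not only about the transverse pattern of $\partial S$ against the sutures; it also constrains closed components of $S\cap R(\gamma)$ and requires $S$ to be open with no closed components, and the actual argument in \cite{Juh08} proceeds by controlled isotopies and possibly replacing $S$ within its homology class rather than just boundary finger moves. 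Your claim that the nice-making modifications do not change the decomposed manifold or its tautness is the right organizing principle and is what Juh\'asz verifies, but it is the substantive content of his proof rather than an obvious consequence of locality. If you intend this as a pointer to the literature, it is adequate; if you intend it as a self-contained proof, you would need to supply Gabai's complexity argument and Juh\'asz's surface-modification lemmas in full.
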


\vspace{10pt}

Let $(M,\gamma)$ be a balanced sutured manifold. We review Ni's definition of the homological action of a relative homology class $\zeta \in H_1(M,\partial M)$ on $\SFH(M,\gamma)$ \cite[Section 2.1]{MR3294567}. These actions are an extension of the homological actions on Heegaard Floer homology defined in \cite[Section 4.2.5]{MR2113019}. We then recall Ni's result that these homological actions are compatible with nice surface decompositions \cite[Theorem 1.1]{MR3294567}.

\begin{df*}[(Homological actions on sutured Floer homology)]
	Let $(\Sigma,\bo\alpha,\bo\beta)$ be an admissible balanced diagram for $(M,\gamma)$, and let $\omega = \sum k_ic_i$ be a formal finite sum of properly embedded oriented curves $c_i$ on $\Sigma$ with integer coefficients $k_i$. Each $c_i$ is required to intersect the $\alpha$- and $\beta$-curves transversely and to be disjoint from every intersection point of the $\alpha$- and $\beta$-curves. Let $\zeta \in H_1(M,\partial M)$ denote the relative homology class that $\omega$ represents. Any relative first homology class of $M$ is represented by such a relative $1$-cycle on $\Sigma$. 

	Let $\SFC(\Sigma,\bo\alpha,\bo\beta)$ be the sutured Floer chain complex whose differential $\partial$ is defined with respect to a suitable family of almost complex structures. In particular \[
		\partial \mathbf{x} = \sum_{\mathbf{y} \in \mathbf{T}_\alpha \cap \mathbf{T}_\beta} \sum_{\substack{\phi \in \pi_2(\mathbf{x},\mathbf{y})\\\mu(\phi) = 1}} \# \widehat{\sr M}(\phi)\cdot\mathbf{y}
	\]for each $\mathbf{x} \in \mathbf{T}_\alpha \cap \mathbf{T}_\beta$. Any Whitney disc $\phi \in \pi_2(\mathbf{x},\mathbf{y})$ has an associated $2$-chain on $\Sigma$ called its domain $D(\phi)$. Let $\partial_\alpha D(\phi)$ be the part of $\partial D(\phi)$ lying in the $\alpha$-circles, thought of as an oriented multi-arc from $\mathbf{x}$ to $\mathbf{y}$. Define $X_\zeta\colon \SFC(\Sigma,\bo\alpha,\bo\beta) \to \SFC(\Sigma,\bo\alpha,\bo\beta)$ by \[
		X_\zeta \cdot \mathbf{x} = \sum_{\mathbf{y} \in \mathbf{T}_\alpha \cap \mathbf{T}_\beta} \sum_{\substack{\phi \in \pi_2(\mathbf{x},\mathbf{y})\\\mu(\phi) = 1}} (\omega \cdot \partial_\alpha D(\phi))\: \# \widehat{\sr M}(\phi)\cdot\mathbf{y}
	\]where $\omega\cdot \partial_\alpha D(\phi)$ is the algebraic intersection number mod $2$. 

	It is shown in \cite{MR3294567,MR2113019} that $X_\zeta$ is a chain map, and its induced map on homology, also denoted $X_\zeta$, squares to zero. Furthermore, the map on homology is independent of the  choice of $\omega$ representing $\zeta \in H_1(M,\partial M)$ and the choice of admissible Heegaard diagram. This induced map on $\SFH(M,\gamma)$ is called the \textit{homological action} of $\zeta$. Unless otherwise stated, $X_\zeta$ refers to the map on homology. Since $X_\zeta\circ X_\zeta = 0$, we may view $\SFH(M,\gamma)$ as a module over $\F_2[X]/X^2$ where the action of $X$ is $X_\zeta$. Note that if $\zeta = 2\zeta'$ is an even homology class, then $X_\zeta = 0$. More generally, $X_\zeta$ is additive in $\zeta$, which is to say that $X_{\zeta + \zeta'} = X_\zeta + X_{\zeta'}$. 
\end{df*}

\begin{example}
	We will make use of the following direct computation. Let $\Sigma$ denote an annulus, and let $\bo\alpha,\bo\beta$ be embedded essential curves which intersect transversely in two points. Then $(\Sigma,\bo\alpha,\bo\beta)$ is an admissible diagram for the balanced sutured manifold $S^3(2)$. Then the two points in the intersection $\mathbf{T}_\alpha \cap \mathbf{T}_\beta$ can be labeled $\mathbf{x},\mathbf{y}$ so that there are two Whitney discs from $\mathbf{x}$ to $\mathbf{y}$. Each has a unique holomorphic representative, and they cancel in the differential of $\SFC(\Sigma,\bo\alpha,\bo\beta)$ so $\dim_{\F_2} \SFH(S^3(2)) = 2$.
	Let $\omega$ be an embedded oriented arc in $\Sigma$ whose endpoints lie on different boundary components and which intersects $\bo\alpha \cup \bo\beta$ transversely in exactly two points. Note that $\omega$ represents a generator $\zeta$ of $H_1(S^3(2),\partial S^3(2))$. Then $X_\zeta \cdot \mathbf{x} = \mathbf{y}$ so $\SFH(S^3(2)) = \F_2[X]/X^2$ as a module with respect to the action of $\zeta$. 

	Essentially the same computation shows that $\HFhat(S^1 \x S^2) = \F_2[X]/X^2$ as a module with respect to the action of a generator of $H_1(S^1 \x S^2)$. 
\end{example}

\begin{rem}
	Let $Y$ be a closed oriented connected $3$-manifold. Then there is an identification $\HFhat(Y) = \SFH(Y(1))$ \cite[Proposition 9.1]{Juh06}. The homological actions on $\HFhat(Y)$ defined in \cite{MR2113019} correspond to the homological actions on $\SFH(Y(1))$ using the natural identification $H_1(Y) = H_1(Y(1),\partial Y(1))$. 
\end{rem}

\begin{rem}\label{rem:disjointUnion}
	Let $(M,\gamma)$ and $(N,\beta)$ be balanced sutured manifolds, and let $\zeta \in H_1(M,\partial M)$ and $\xi \in H_1(N,\partial N)$. Consider the disjoint union $(M \amalg N, \gamma \cup \beta)$ and the relative homology class \[
		\zeta \oplus \xi \in H_1(M,\partial M) \oplus H_1(N,\partial N) = H_1(M \amalg N, \partial (M \amalg N)).
	\]The homological action $X_{\zeta\oplus\xi}$ on $\SFH(M \amalg N,\gamma \cup \beta) = \SFH(M,\gamma) \otimes_{\F_2} \SFH(N,\beta)$ is given by \[
		X_{\zeta\oplus\xi}(a\otimes b) = (X_\zeta a)\otimes b + a \otimes (X_\xi b).
	\]
\end{rem}

\begin{thm}[{\cite[Theorem 1.1]{MR3294567}}]\label{thm:niSurfDecomposition}
	Let $(M,\gamma) \overset{S}{\rightsquigarrow} (M',\gamma')$ be a nice surface decomposition of balanced sutured manifolds. Let $i_*\colon H_1(M,\partial M) \to H_1(M, (\partial M) \cup S) \cong H_1(M',\partial M')$ be the map induced by the inclusion map $i\colon (M,\partial M) \to (M,(\partial M) \cup S)$. 

	View $\SFH(M,\gamma)$ as an $\F_2[X]/X^2$-module with respect to the homological action of some $\zeta \in H_1(M,\partial M)$. View $\SFH(M',\gamma')$ as an $\F_2[X]/X^2$-module with respect to the homological action of $i_*(\zeta) \in H_1(M',\partial M')$. Then $\SFH(M',\gamma')$ is isomorphic as an $\F_2[X]/X^2$-module to a direct summand of $\SFH(M,\gamma)$ as an $\F_2[X]/X^2$-module. 
\end{thm}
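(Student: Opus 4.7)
My plan is to upgrade Juh\'asz's proof \cite{Juh08} of the underlying vector-space-level statement---that $\SFH(M',\gamma')$ is a direct summand of $\SFH(M,\gamma)$ as $\F_2$-vector spaces---to the $\F_2[X]/X^2$-module level. Juh\'asz's proof proceeds by constructing an admissible balanced Heegaard diagram $(\Sigma,\bo\alpha,\bo\beta)$ for $(M,\gamma)$ that is adapted to the decomposing surface $S$, together with a distinguished polygonal region $P \subset \Sigma$ serving as a partial Heegaard diagram for $S$. The intersection points $\mathbf{T}_\alpha \cap \mathbf{T}_\beta$ split into an \emph{outer} subset $O$ (those with all coordinates in $\Sigma \setminus P$) and the complement $R$; a basepoint/multiplicity argument then shows $\SFC(\Sigma,\bo\alpha,\bo\beta) = \SFC_O \oplus \SFC_R$ as chain complexes, with $\SFC_O$ itself a sutured Floer chain complex for $(M',\gamma')$. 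The key point is that any Whitney disc $\phi$ with one endpoint in $O$ and the other in $R$ is forced by its domain to have positive multiplicity in a forbidden region of $P$, so $\#\widehat{\sr M}(\phi) = 0$.

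The first new step would be to choose a 1-cycle representative $\omega$ of $\zeta$ lying on $\Sigma$ and disjoint from $P$. This is possible because $\omega$ can be isotoped off $P$ without changing its class in $H_1(M,\partial M)$, and after $\Sigma$ is modified along the $S$-cut to become the Heegaard surface of $(M',\gamma')$, the cycle $\omega$ descends to a 1-cycle on the new diagram representing $i_*(\zeta) \in H_1(M',\partial M')$. I would then verify that with this choice, the chain map $X_\omega$ preserves the decomposition $\SFC = \SFC_O \oplus \SFC_R$ and restricts on $\SFC_O$ to the chain map computing $X_{i_*(\zeta)}$. Preservation follows immediately from Juh\'asz's basepoint argument: any Whitney disc mixing $O$ and $R$ has $\#\widehat{\sr M}(\phi) = 0$, so its weighted contribution $(\omega \cdot \partial_\alpha D(\phi)) \cdot \#\widehat{\sr M}(\phi)$ to $X_\omega$ also vanishes. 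The restriction on $\SFC_O$ is simply the chain map for the action of $\omega$ on the Heegaard diagram for $(M',\gamma')$, which by construction represents $i_*(\zeta)$. Passing to homology yields the desired direct summand of $\F_2[X]/X^2$-modules.

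The main obstacle, and the technical heart of the argument, lies in the input from Juh\'asz: explicitly constructing the adapted Heegaard diagram with its polygonal region $P$, carrying out the basepoint analysis that establishes the chain-level direct sum decomposition, and arranging $\omega$ to satisfy the required disjointness from $P$ while still representing the desired relative homology class. Once this machinery is in place, the upgrade to the module-theoretic conclusion is essentially formal: reweighting the disc counts by $\omega \cdot \partial_\alpha D(\phi) \bmod 2$ cannot introduce mixing between $O$ and $R$, since the underlying disc count already vanishes by construction.
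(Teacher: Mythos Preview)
The paper does not supply a proof of this theorem: it is quoted verbatim from Ni \cite[Theorem 1.1]{MR3294567} and used as a black box. So there is no in-paper argument to compare against; the relevant comparison is to Ni's original proof.

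Your outline is exactly Ni's strategy. Ni works with Juh\'asz's surface diagram $(\Sigma,\bo\alpha,\bo\beta,P)$ adapted to $S$, chooses a representative $\omega$ of $\zeta$ on $\Sigma$ that avoids the distinguished polygon $P$, and observes that the same positivity/basepoint argument that kills mixed differentials also kills mixed terms of $X_\omega$, since the weight $(\omega\cdot\partial_\alpha D(\phi))$ multiplies a count $\#\widehat{\sr M}(\phi)$ that is already zero. The restriction of $X_\omega$ to the outer subcomplex is then, by construction of the cut diagram, the homological action of $i_*(\zeta)$ on $\SFH(M',\gamma')$. Your identification of the only nontrivial step---arranging $\omega$ to be disjoint from $P$ while still representing $\zeta$, and checking that after the cut it represents $i_*(\zeta)$---matches where Ni spends his effort. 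In short, your proposal is correct and is the same approach as the cited proof.
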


The following lemma addresses the special case where the decomposing surface $S$ is a product disc.  
Recall that a decomposing surface $D$ in a sutured manifold $(M,\gamma)$ is called a \textit{product disc} \cite[Definitions 0.1]{Gab87a} if $D$ is a disc and $|D \cap s(\gamma)| = 2$. 

\begin{lem}[{\cite[Lemma 9.13]{Juh06}}]\label{lem:productDiscModuleIso}
	Let $(M,\gamma)$ be a balanced sutured manifold, let $(M,\gamma) \overset{D}{\rightsquigarrow} (M',\gamma')$ be a product disc decomposition, and let $\zeta \in H_1(M,\partial M)$. Then there is an isomorphism $\SFH(M,\gamma) \cong \SFH(M',\gamma')$ of $\F_2[X]/X^2$-modules where $X$ acts on $\SFH(M,\gamma)$ and $\SFH(M',\gamma')$ by the homological actions of $\zeta$ and $i_*(\zeta)$, respectively.
\end{lem}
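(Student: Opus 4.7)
The plan is to upgrade Juhász's original vector-space isomorphism to a module isomorphism by invoking the module-level statement of Theorem~\ref{thm:niSurfDecomposition} and then using a dimension count to rule out any complementary summand.

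First, I would observe that a product disc decomposition is automatically a nice surface decomposition in the sense of \cite{Juh08}. Indeed, a product disc $D$ satisfies $|D \cap s(\gamma)| = 2$ with one point on each of the two arcs of $\partial D \setminus s(\gamma)$, which is precisely the local combinatorial condition that makes the decomposition nice. This is a known observation, but it is the main point that has to be verified before anything else can be applied.

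Second, with niceness in hand, I would apply Theorem~\ref{thm:niSurfDecomposition} to the decomposition $(M,\gamma) \overset{D}{\rightsquigarrow} (M',\gamma')$ and the class $\zeta \in H_1(M,\partial M)$. This produces an injective $\F_2[X]/X^2$-module homomorphism realizing $\SFH(M',\gamma')$ as a direct summand of $\SFH(M,\gamma)$, where the actions are those of $i_*(\zeta)$ and $\zeta$ respectively.

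Third, I would invoke Juhász's original Lemma 9.13 in \cite{Juh06} — the cited statement — to conclude that $\dim_{\F_2} \SFH(M,\gamma) = \dim_{\F_2} \SFH(M',\gamma')$ as $\F_2$-vector spaces. Combined with the previous step, this forces the direct summand to be all of $\SFH(M,\gamma)$: a direct summand of a finite-dimensional $\F_2$-vector space having the full dimension must equal the whole space, so the complementary $\F_2[X]/X^2$-summand is zero. This yields the desired isomorphism of $\F_2[X]/X^2$-modules.

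The main obstacle is purely the niceness verification in the first step; once it is in place, Ni's compatibility theorem does all the conceptual work, and the rest is a dimensional pigeonhole argument. No new analysis of holomorphic discs or of the action $X_\zeta$ is required, since both pieces of input — the module-level statement for nice decompositions and the vector-space isomorphism for product discs — are cited directly from the literature.
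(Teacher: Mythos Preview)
Your argument is correct, but it is not the route the paper takes. The paper's one-line proof points back to the \emph{proof} of \cite[Lemma 9.13]{Juh06}: Juh\'asz shows that a balanced diagram $(\Sigma,\bo\alpha,\bo\beta)$ for $(M,\gamma)$ can be cut along an arc to give a balanced diagram for $(M',\gamma')$ with literally the same $\alpha$- and $\beta$-curves, the same generators, and the same domains. One then chooses a representative $\omega$ for $\zeta$ on $\Sigma$ disjoint from the cutting arc, so that the chain-level map $X_\zeta$ is identical on both sides and represents $i_*(\zeta)$ after the cut. The module isomorphism is thus an identity at the chain level.

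Your approach instead treats Theorem~\ref{thm:niSurfDecomposition} as a black box: a product disc is nice, so Ni gives a module direct-summand inclusion, and Juh\'asz's dimension equality forces the complement to vanish. This is logically sound and pleasantly formal---no diagrams need to be drawn---but it invokes strictly heavier machinery (Ni's theorem is itself proved by constructing adapted Heegaard diagrams and tracking the action through them). The paper's approach is more elementary and makes transparent \emph{why} the module structures match, namely that nothing at all changes at the chain level; your approach trades that transparency for brevity and for not having to reopen Juh\'asz's argument.
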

\begin{proof}
	The lemma follows directly from the definition of the homological action and the proof of \cite[Lemma 9.13]{Juh06}. 
\end{proof}

We will use the following two connected-sum formulas for the homological action on sutured Floer homology. 

\begin{lem}\label{lem:connectSumMandY}
	Let $(M,\gamma)$ be a balanced sutured manifold, and let $Y$ be a closed oriented $3$-manifold. Fix $\zeta \in H_1(M \# Y, \partial (M \# Y))$ and write $\zeta = \zeta' + \zeta''$ according to the decomposition $H_1(M \# Y,\partial (M\#Y)) = H_1(M,\partial M) \oplus H_1(Y)$. Then there is an isomorphism of $\F_2[X]/X^2$-modules \[
		\SFH(M\#Y,\gamma) \cong \SFH(M,\gamma) \otimes_{\F_2} \HFhat(Y)
	\]where $X$ acts on $\SFH(M\#Y,\gamma)$ by $X_\zeta$ while $X$ acts on the right-hand side by \[
		X(a \otimes b) = (X_{\zeta'}a) \otimes b + a \otimes (X_{\zeta''}b).
	\]
\end{lem}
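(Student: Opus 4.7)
The plan is to reduce the statement to a chain-level computation on a connect-sum Heegaard diagram. First, fix an admissible balanced diagram $\mathscr H_M = (\Sigma_1,\bo\alpha_1,\bo\beta_1)$ for $(M,\gamma)$ and an admissible pointed Heegaard diagram $(\Sigma_2,\bo\alpha_2,\bo\beta_2,z)$ for $Y$, which simultaneously serves as a balanced diagram $\mathscr H_Y$ for $Y(1)$, with $z$ corresponding to the unique suture on the sphere boundary. Form the connect-sum diagram
\[
\mathscr H = \bigl(\Sigma_1 \# \Sigma_2,\; \bo\alpha_1 \cup \bo\alpha_2,\; \bo\beta_1 \cup \bo\beta_2\bigr)
\]
by tubing at the basepoint $z \in \Sigma_2$ and at an interior point $p \in \Sigma_1$ chosen disjoint from $\bo\alpha_1 \cup \bo\beta_1$ and from a representing cycle for $\zeta$ to be chosen below. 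A routine check shows that $\mathscr H$ is an admissible balanced diagram for $(M \# Y,\gamma)$.

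The second step is the chain-level connect-sum formula. A standard neck-stretching argument (as in the proof of the Künneth formula for connect-sums in Ozsváth--Szabó) shows that, after sufficiently stretching the connecting neck, Maslov-index-$1$ holomorphic Whitney discs in $\mathscr H$ are in bijection with pairs $(\phi_1,\phi_2)$ where $\phi_i$ is a holomorphic Whitney disc in $\mathscr H_i$ (with $\mathscr H_2 := \mathscr H_Y$) of Maslov indices $\mu_1,\mu_2 \ge 0$ summing to $1$, and the domain of one $\phi_i$ is zero. This yields a chain isomorphism
\[
\SFC(\mathscr H) \;\cong\; \SFC(\mathscr H_M) \otimes_{\F_2} \CFhat(\mathscr H_Y)
\]
with the Leibniz differential, recovering the usual $\SFH$ connect-sum formula on homology via the identification $\HFhat(Y) = \SFH(Y(1))$ recalled in Section~\ref{sec:prelim}.

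The third step upgrades this to an $\F_2[X]/X^2$-module isomorphism. Choose a representing $1$-cycle for $\zeta$ of the form $\omega = \omega_1 \cup \omega_2$, with $\omega_i$ a properly embedded oriented multi-curve on $\Sigma_i$ disjoint from the tubing region and satisfying the transversality requirements in the definition of $X_\zeta$. Under the splitting $H_1(M \# Y,\partial(M\#Y)) = H_1(M,\partial M) \oplus H_1(Y)$, the cycles $\omega_1$ and $\omega_2$ represent $\zeta'$ and $\zeta''$ respectively. For a split Maslov-index-$1$ disc $(\phi_1,\phi_2)$ with $\phi_2$ constant, the $\alpha$-boundary $\partial_\alpha D(\phi) = \partial_\alpha D(\phi_1)$ is supported in $\Sigma_1$ and disjoint from $\omega_2$, so
\[
\omega \cdot \partial_\alpha D(\phi) \;=\; \omega_1 \cdot \partial_\alpha D(\phi_1);
\]
symmetrically when $\phi_1$ is constant. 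Feeding this into the definition of the chain-level $X_\zeta$ yields precisely the Leibniz formula $X_\zeta(a \otimes b) = (X_{\zeta'}a)\otimes b + a \otimes (X_{\zeta''}b)$, as desired.

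The main obstacle is Step 2: verifying the chain-level connect-sum formula in the sutured setting. It is essentially a transcription of the closed-manifold neck-stretching argument, but one must check admissibility of $\mathscr H$ and that periodic domain considerations adapt to the sutured setting. Once this identification is in place, Step 3 reduces the module refinement to the purely combinatorial observation that intersection numbers on $\Sigma_1 \# \Sigma_2$ split along the chosen decomposition $\omega = \omega_1 \cup \omega_2$.
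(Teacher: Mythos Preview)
Your argument is correct, but it takes a different route from the paper. The paper's proof is a two-line reduction: it observes (following Juh\'asz, Proposition~9.15) that there is a product disc decomposition
\[
(M\#Y,\gamma)\ \overset{D}{\rightsquigarrow}\ (M,\gamma)\amalg Y(1),
\]
and then invokes Lemma~\ref{lem:productDiscModuleIso} (product disc decompositions induce $\F_2[X]/X^2$-module isomorphisms) together with Remark~\ref{rem:disjointUnion} (the homological action on a disjoint union is Leibniz on the tensor product). By contrast, you work directly at the chain level on a connect-sum Heegaard diagram, establish the K\"unneth identification by neck-stretching, and then read off the Leibniz formula for $X_\zeta$ from the splitting $\omega=\omega_1\cup\omega_2$ and the fact that index-one discs are supported on one side. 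Your approach is more self-contained and makes explicit why the module structure behaves well under connect sum, at the cost of redoing the analytic neck-stretching argument in the sutured setting (which you correctly flag as the main burden). The paper's approach is shorter because the analytic content has already been packaged into Lemma~\ref{lem:productDiscModuleIso}; once that lemma is in hand, both the vector-space isomorphism and its compatibility with the $X$-action come for free.
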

\begin{proof}
	As observed in \cite[Proposition 9.15]{Juh06}, there is a product disc decomposition \[
		(M\# Y, \gamma) \overset{D}{\rightsquigarrow} (M,\gamma) \amalg Y(1).
	\]The result now follows from Remark~\ref{rem:disjointUnion} and Lemma~\ref{lem:productDiscModuleIso}.
\end{proof}

\begin{lem}\label{lem:connectedSumN1andN2}
	Let $(M,\gamma)$ be a connected sum of balanced sutured manifolds $(N_1,\beta_1)$ and $(N_2,\beta_2)$, and let $\zeta \in H_1(M,\partial M)$. Then there is an isomorphism of $\F_2[X]/X^2$-modules \[
		\SFH(M,\gamma) \cong \SFH(N_1,\beta_1) \otimes_{\F_2} \SFH(N_2,\beta_2) \otimes_{\F_2} \SFH(S^3(2))
	\]where $X$ acts on $\SFH(M,\gamma)$ by the homological action of $\zeta$, and $X$ acts on the right-hand side by \[
		X(a \otimes b \otimes c) = (X_{\zeta_1}a) \otimes b \otimes c + a \otimes (X_{\zeta_2}b) \otimes c + a \otimes b \otimes (X_\xi c)
	\]for certain classes $\zeta_1\in H_1(N_1,\partial N_1), \zeta_2 \in H_1(N_2,\partial N_2)$, and $\xi \in H_1(S^3(2),\partial S^3(2))$. 

	Let $S$ be the $2$-sphere in $M$ along which the connected sum is formed. If $S \cdot \zeta$ is odd, then $\SFH(S^3(2)) \cong \F_2[X]/X^2$ with respect to the homological action of $\xi$. If $S \cdot\zeta$ is even, then $X_\xi = 0$. 
\end{lem}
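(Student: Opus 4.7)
The plan is to identify $(M,\gamma)$ with a sutured manifold admitting a pair of product disc decompositions yielding the disjoint union $(N_1,\beta_1) \amalg (N_2,\beta_2) \amalg S^3(2)$, then transfer the tensor decomposition of $\SFH$ together with its $\F_2[X]/X^2$-module structure via Lemma~\ref{lem:productDiscModuleIso} and Remark~\ref{rem:disjointUnion}, in direct analogy with the proof of Lemma~\ref{lem:connectSumMandY}.

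I would first establish a sutured-manifold isomorphism
\[
(M,\gamma) \cong (N_1,\beta_1) \natural S^3(2) \natural (N_2,\beta_2),
\]
where the two $\natural$'s are boundary connect sums along product discs attaching the two boundary spheres of $S^3(2)$ to $\partial N_1$ and $\partial N_2$ respectively. Under this identification, the central sphere $S^2 \times \{\tfrac{1}{2}\} \subset S^3(2)$ corresponds to the connect-sum sphere $S \subset M$. The two product discs of the iterated boundary connect sum then yield a sequence of decompositions
\[
(M,\gamma) \;\overset{D_1}{\rightsquigarrow}\; \overset{D_2}{\rightsquigarrow}\; (N_1,\beta_1) \amalg (N_2,\beta_2) \amalg S^3(2).
\]
Applying Lemma~\ref{lem:productDiscModuleIso} twice and then Remark~\ref{rem:disjointUnion} yields the $\F_2[X]/X^2$-module isomorphism
\[
\SFH(M,\gamma) \;\cong\; \SFH(N_1,\beta_1) \otimes_{\F_2} \SFH(N_2,\beta_2) \otimes_{\F_2} \SFH(S^3(2))
\]
with $X$ acting on the right by the stated Leibniz formula. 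Here $\zeta_1 \in H_1(N_1,\partial N_1)$, $\zeta_2 \in H_1(N_2,\partial N_2)$, and $\xi \in H_1(S^3(2),\partial S^3(2))$ are the three components of the image of $\zeta$ under the composite of the two inclusion-induced maps on relative first homology from Theorem~\ref{thm:niSurfDecomposition}.

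To complete the proof, I would compute $\xi$ via intersection theory. The group $H_1(S^3(2),\partial S^3(2)) \cong \Z$ is generated by a core arc crossing the $S^3(2)$ bridge from one boundary sphere to the other, which, under the inverse of the decompositions, is an arc in $M$ intersecting $S$ exactly once. By naturality of the intersection pairing, the image of $\zeta$ in the last tensor factor is $(S \cdot \zeta) \cdot \xi_0$, where $\xi_0$ is this generator. Since the homological action is additive in $\zeta$ and $X_{2\zeta'} = 0$, the action $X_\xi$ on $\SFH(S^3(2))$ is trivial when $S \cdot \zeta$ is even and coincides with the nontrivial map computed in the example (so that $\SFH(S^3(2)) \cong \F_2[X]/X^2$ as a module with respect to $\xi_0$) when $S \cdot \zeta$ is odd. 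The principal obstacle is the first step---establishing the sutured isomorphism $(M,\gamma) \cong (N_1,\beta_1) \natural S^3(2) \natural (N_2,\beta_2)$ and exhibiting the product discs $D_1, D_2$ explicitly---which requires a careful identification of a tubular neighborhood of $S$ in $M$ with the middle $S^3(2)$ bridge piece.
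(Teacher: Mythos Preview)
Your proposal is correct and follows essentially the same route as the paper: two successive product disc decompositions reduce $(M,\gamma)$ to $(N_1,\beta_1)\amalg(N_2,\beta_2)\amalg S^3(2)$, after which Lemma~\ref{lem:productDiscModuleIso} and Remark~\ref{rem:disjointUnion} give the module isomorphism, and the parity of $S\cdot\zeta$ determines $\xi$ modulo $2$. The only cosmetic difference is that the paper invokes \cite[Proposition~9.15]{Juh06} directly for the decompositions (with intermediate stage $(N_1,\beta_1)\amalg(N_2,\beta_2)(1)$) rather than phrasing the construction as an iterated boundary connect sum; what you flag as the ``principal obstacle'' is exactly what that citation supplies.
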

\begin{proof}
	Again as observed in \cite[Proposition 9.15]{Juh06}, there are product disc decompositions \[
		(M,\gamma) \overset{D}{\rightsquigarrow} (N_1,\beta_1) \amalg (N_2,\beta_2)(1) \overset{D'}{\rightsquigarrow} (N_1,\beta_1) \amalg (N_2,\beta_2) \amalg S^3(2)
	\]Let $\zeta_1 \oplus \zeta_2 \oplus \xi$ be the image of $\zeta$ in \[
		H_1(M_1,\partial M_1) \oplus H_1(M_2,\partial M_2) \oplus H_1(S^3(2),\partial (S^3(2))).
	\]Note that $\xi$ is an even class if and only if $S \cdot \zeta$ is even. Furthermore, a direct computation gives an isomorphism $\SFH(S^3(2)) \cong \F_2[X]/X^2$ as modules when $X$ is the homological action of a generator. The result now follows from Remark~\ref{rem:disjointUnion} and Lemma~\ref{lem:productDiscModuleIso}.  
\end{proof}

\vspace{10pt}

Before turning to the main results, we record here an algebraic lemma. 

\begin{lem}\label{lem:freenessOfTensorProduct}
	Let $M_1,\ldots,M_k$ be a collection of finitely-generated $\F_2[X]/X^2$-modules, and view $M = M_1 \otimes_{\F_2} \cdots \otimes_{\F_2} M_k$ as an $\F_2[X]/X^2$-module where the action of $X$ is defined by \[
		X(m_1 \otimes \cdots \otimes m_k) = \sum_{i=1}^k m_1 \otimes \cdots \otimes m_{i-1} \otimes (Xm_i) \otimes m_{i+1} \otimes \cdots \otimes m_k.
	\]Then $M$ is free if and only if at least one of the $M_i$ is free. 
\end{lem}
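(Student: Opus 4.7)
The plan is to reduce freeness of a finitely generated $\F_2[X]/X^2$-module $N$ to the vanishing of a homological invariant that behaves well under tensor products, and then invoke the Künneth formula.

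First I would observe that, for any finitely generated $\F_2[X]/X^2$-module $N$, the pair $(N,X)$ is a two-term chain complex (since $X^2=0$), and I will show that $N$ is free if and only if its homology $H(N,X) = \Ker(X)/\Image(X)$ vanishes. This follows from the classification: any finitely generated $\F_2[X]/X^2$-module is isomorphic to $(\F_2[X]/X^2)^a \oplus \F_2^b$ where $\F_2$ carries the trivial action, and one computes directly that $H(\F_2[X]/X^2,X) = 0$ while $H(\F_2,X) = \F_2$. Hence $\dim_{\F_2} H(N,X) = b$, and $N$ is free exactly when this dimension is zero.

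Next I would identify the $\F_2[X]/X^2$-module structure on $M = M_1 \otimes_{\F_2} \cdots \otimes_{\F_2} M_k$ described in the lemma with the tensor product of the chain complexes $(M_i,X)$; indeed the formula for the action of $X$ on a simple tensor is precisely the Leibniz rule that defines the differential on a tensor product of chain complexes over $\F_2$. Applying the Künneth formula over the field $\F_2$ then gives
\[
H(M,X) \;\cong\; H(M_1,X) \otimes_{\F_2} \cdots \otimes_{\F_2} H(M_k,X).
\]

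Finally, since we are working over the field $\F_2$, a tensor product of finite-dimensional vector spaces vanishes if and only if one of the factors vanishes. Combining this with the first step, $M$ is free iff $H(M,X) = 0$ iff some $H(M_i,X) = 0$ iff some $M_i$ is free. There is no real obstacle here: the only thing to be careful about is the reduction of freeness to vanishing of $X$-homology, which requires the classification for finitely generated modules over $\F_2[X]/X^2$ (a Frobenius/self-injective Artin local ring), but for this specific ring the classification is elementary.
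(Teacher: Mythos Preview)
Your argument is correct. Both you and the paper begin with the classification $N \cong (\F_2[X]/X^2)^a \oplus \F_2^b$, but from there the routes diverge slightly. The paper proceeds by direct computation of the three elementary tensor products $\F_2[X]/X^2 \otimes \F_2[X]/X^2$, $\F_2[X]/X^2 \otimes \F_2$, and $\F_2 \otimes \F_2$ as $\F_2[X]/X^2$-modules, checking which are free; the general case then follows by distributing the tensor product over direct sums and (implicitly) inducting on $k$. Your approach instead encodes freeness as the single numerical invariant $\dim H(N,X)$ and lets the K\"unneth formula do the bookkeeping. The payoff of your route is that the $k$-fold statement is immediate with no induction or case analysis, and you never have to identify $\F_2[X]/X^2 \otimes_{\F_2} \F_2[X]/X^2$ explicitly; the paper's computation, on the other hand, is entirely self-contained and avoids invoking K\"unneth. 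The underlying content is the same.
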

\begin{proof}
	View $\F_2$ as an $\F_2[X]/X^2$-module where $X$ acts by zero. Then any finitely-generated $\F_2[X]/X^2$-module is isomorphic to $(\F_2[X]/X^2)^{\oplus n} \oplus (\F_2)^{\oplus m}$ for some nonnegative integers $n,m$. The result follows from the computations that \[
		\frac{\F_2[X]}{X^2} \otimes_{\F_2} \frac{\F_2[X]}{X^2} \qquad\text{ and }\qquad \frac{\F_2[X]}{X^2} \otimes_{\F_2} \F_2
	\]are free while $\F_2 \otimes_{\F_2} \F_2$ is not. 
\end{proof}

\section{Main results}

The following lemma contains the main argument of this short paper. Using this lemma, we provide a quick proof of Theorem~\ref{thm:splitLinkDetection} before turning to its generalization. 

\begin{lem}\label{lem:tautImpliesNotFree}
	Let $(M,\gamma)$ be a taut balanced sutured manifold with $\zeta \in H_1(M,\partial M)$. Then $\SFH(M,\gamma)$ is not a free $\F_2[X]/X^2$-module with respect to the homological action of $\zeta$. 
\end{lem}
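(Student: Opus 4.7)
The plan is to reduce to the case of a balanced product sutured manifold by running a nice surface hierarchy, and then to observe that the resulting balanced product sutured manifold has one-dimensional sutured Floer homology on which the action of $X$ is automatically zero—giving a non-free $\F_2[X]/X^2$-module that descends as a direct summand from $\SFH(M,\gamma)$.

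More precisely, I would first apply Theorem~\ref{thm:niceHierarchy} to obtain a nice surface hierarchy
\[
	(M,\gamma) = (M_0,\gamma_0) \overset{S_1}{\rightsquigarrow} (M_1,\gamma_1) \overset{S_2}{\rightsquigarrow} \cdots \overset{S_n}{\rightsquigarrow} (M_n,\gamma_n)
\]
with $(M_n,\gamma_n)$ a balanced product sutured manifold. Setting $\zeta_0 = \zeta$ and inductively $\zeta_{i+1} = (i_{i+1})_*\zeta_i \in H_1(M_{i+1},\partial M_{i+1})$ using the inclusion-induced map of Theorem~\ref{thm:niSurfDecomposition}, iterated application of that theorem shows that $\SFH(M_n,\gamma_n)$, viewed as an $\F_2[X]/X^2$-module via the homological action of $\zeta_n$, is isomorphic to a direct summand of $\SFH(M,\gamma)$ viewed as an $\F_2[X]/X^2$-module via the homological action of $\zeta$.

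Next I would compute the sutured Floer homology of the balanced product sutured manifold $(M_n,\gamma_n) = ([-1,1]\times\Sigma,[-1,1]\times\partial\Sigma)$ directly. It admits an admissible Heegaard diagram with $|\mathbf{T}_\alpha \cap \mathbf{T}_\beta| = 1$, so $\SFH(M_n,\gamma_n) \cong \F_2$, on which $X_{\zeta_n}$ is forced to act as zero. Hence $\SFH(M_n,\gamma_n)$ is isomorphic to $\F_2$ as an $\F_2[X]/X^2$-module with trivial $X$-action, which is \emph{not} free.

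Finally, I would invoke the structure theorem for finitely-generated $\F_2[X]/X^2$-modules used in Lemma~\ref{lem:freenessOfTensorProduct}: every such module is of the form $(\F_2[X]/X^2)^{\oplus a}\oplus\F_2^{\oplus b}$, and a quick dimension count (compare $\dim_{\F_2}$ of the module with $\dim_{\F_2}$ of the image of $X$) shows that a direct summand of a free finitely-generated $\F_2[X]/X^2$-module is free. Applied to our situation, freeness of $\SFH(M,\gamma)$ would force freeness of the summand $\SFH(M_n,\gamma_n) \cong \F_2$, contradicting the previous paragraph. The only substantive step is the invocation of Theorem~\ref{thm:niSurfDecomposition}, which is precisely the tool that upgrades the Juhász–Gabai sutured hierarchy into the $\F_2[X]/X^2$-module category; I expect the rest to be essentially bookkeeping.
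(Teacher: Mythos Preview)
Your proposal is correct and follows essentially the same route as the paper's proof: run the nice surface hierarchy of Theorem~\ref{thm:niceHierarchy}, apply Theorem~\ref{thm:niSurfDecomposition} to realize the product sutured manifold's $\SFH$ as an $\F_2[X]/X^2$-summand, and observe that its one-dimensional sutured Floer homology cannot be free. The paper cites \cite[Proposition~9.4]{Juh06} for $\dim_{\F_2}\SFH(M_n,\gamma_n)=1$ rather than describing an explicit diagram, and leaves the ``summand of free is free'' step implicit, but otherwise the arguments coincide.
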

\begin{proof}
	By Theorem~\ref{thm:niceHierarchy}, we may find a sequence of nice surface decompositions from $(M,\gamma)$ to a product sutured manifold $(N,\beta)$. Then $\SFH(N,\beta)$ is isomorphic to a direct summand of $\SFH(M,\gamma)$ as an $\F_2[X]/X^2$-module by Theorem~\ref{thm:niSurfDecomposition} where $X$ acts on $\SFH(N,\beta)$ by some homological action. Since $\dim_{\F_2} \SFH(N,\beta) = 1$ by \cite[Proposition 9.4]{Juh06}, it cannot be a free $\F_2[X]/X^2$-module. Thus $\SFH(M,\gamma)$ is not a free $\F_2[X]/X^2$-module. 
\end{proof}

\begin{proof}[Proof of Theorem~\ref{thm:splitLinkDetection}]
	Recall that $\HFLhat(L) = \SFH(S^3(L))$ \cite[Proposition 9.2]{Juh06} where $S^3(L)$ is the sutured exterior of $L$. If $L$ is split, then $\SFH(S^3(L))$ is a free $\F_2[X]/X^2$-module by a computation in a carefully chosen Heegaard diagram. This computation can be formalized in the following way. If $L$ is the split union of the knots $K$ and $J$, then $S^3(L) = S^3(K) \:\#\: S^3(J)$. By Lemma~\ref{lem:connectedSumN1andN2}, there is an isomorphism of $\F_2[X]/X^2$-modules \[
		\SFH(S^3(L)) \cong \SFH(S^3(K)) \otimes_{\F_2} \SFH(S^3(J)) \otimes_{\F_2} \SFH(S^3(2))
	\]where the action of $X$ on the right-hand side is given by $\Id \otimes \Id \otimes \,X_\xi$ where $\xi$ is a generator of $H_1(S^3(2),\partial S^3(2))$. Since $\SFH(S^3(2)) \cong \F_2[X]/X^2$ as modules with respect to the action of $\xi$, it follows from Lemma~\ref{lem:freenessOfTensorProduct} that $\SFH(S^3(L))$ is a free $\F_2[X]/X^2$-module. 

	If $L$ is not split, then $S^3(L)$ is taut, so $\SFH(S^3(L))$ is not free by Lemma~\ref{lem:tautImpliesNotFree}. 
\end{proof}

\vspace{10pt}

The next lemma is a direct consequence of \cite[Lemma 5.6]{lipshitz2019khovanov}, which Lipshitz-Sarkar prove using \cite[Theorem 1.1]{MR4010864}. This result of Alishahi-Lipshitz builds on work of Ni \cite{MR3044606}. See also \cite[Theorem 4 and Corollary 5.2]{MR3190305}. 

\begin{lem}\label{lem:closedYnotFree}
	Let $Y$ be an irreducible closed oriented $3$-manifold with $\zeta \in H_1(Y)$. Then $\HFhat(Y)$ is not a free $\F_2[X]/X^2$-module with respect to the homological action of $\zeta$. 
\end{lem}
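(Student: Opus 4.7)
The plan is to split into two cases according to whether the mod $2$ reduction $\bar\zeta \in H_1(Y;\F_2)$ of $\zeta$ is zero or not, disposing of the first case by elementary means and reducing the second directly to the black-box cited just before the statement.

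If $\bar\zeta = 0$, I would write $\zeta = 2\zeta'$ and use the additivity of the homological action together with the $\F_2$-coefficients to conclude $X_\zeta = X_{\zeta'} + X_{\zeta'} = 0$. A finitely-generated $\F_2[X]/X^2$-module on which $X$ acts trivially is a direct sum of copies of the trivial module $\F_2$, and by the calculation already spelled out in the proof of Lemma~\ref{lem:freenessOfTensorProduct} such a module is free precisely when it vanishes. Since $\HFhat(Y) \neq 0$ for any closed oriented $3$-manifold by Ozsv\'ath-Szab\'o, this rules out freeness whenever $\bar\zeta = 0$.

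If $\bar\zeta \neq 0$, the plan is to invoke \cite[Lemma 5.6]{lipshitz2019khovanov} as a black box. That lemma distills the work of Ni \cite{MR3044606} and Alishahi-Lipshitz \cite{MR4010864} on non-separating spheres and gives exactly the desired conclusion: if the $\F_2[X]/X^2$-module structure on $\HFhat(Y)$ induced by a class with nontrivial mod $2$ reduction is free, then $Y$ must contain a non-separating $2$-sphere. Irreducibility of $Y$ forbids any non-separating $2$-sphere, so freeness is ruled out.

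The main obstacle in the lemma, namely the case $\bar\zeta \neq 0$, is thus absorbed entirely into the cited Lipshitz-Sarkar result (which is the only place deep symplectic input enters this paper). The content of the proof to be written is just the elementary mod~$2$ case analysis above and the verification that the hypotheses and conventions of \cite[Lemma 5.6]{lipshitz2019khovanov} line up with the homological action as defined in the preliminaries.
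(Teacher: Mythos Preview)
Your overall strategy---reduce to \cite[Lemma~5.6]{lipshitz2019khovanov}---is exactly what the paper does, but you have misread what that lemma actually delivers. As the paper uses it, Lemma~5.6 is a statement about the \emph{chain complex}: if $S\cdot\zeta$ is even for every embedded $2$-sphere $S$ in $Y$, then the unrolled homology of $\CFhat(Y)$ with respect to $\zeta$ is nontrivial. It does not speak directly about the $\F_2[X]/X^2$-module structure on $\HFhat(Y)$. The paper fills this gap with a short spectral-sequence argument: the horizontal filtration on the unrolled complex of $\CFhat(Y)$ gives a spectral sequence whose $E^1$-page is the unrolled complex of the module $\HFhat(Y)$ and whose $E^\infty$-page is the unrolled homology of $\CFhat(Y)$; nonvanishing of $E^\infty$ forces nonvanishing of $E^2$, which is exactly the statement that $\HFhat(Y)$ is not free. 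Your proposal collapses this step into the citation, so as written the case $\bar\zeta\neq 0$ has a genuine hole.

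Your case split on $\bar\zeta$ is also unnecessary. The hypothesis Lemma~5.6 needs is that $S\cdot\zeta$ is even for every sphere $S$; in an irreducible $3$-manifold every embedded sphere bounds a ball, so $S\cdot\zeta=0$ automatically for \emph{every} $\zeta$, and the paper's argument runs uniformly without ever distinguishing $\bar\zeta=0$ from $\bar\zeta\neq 0$. (Your treatment of the $\bar\zeta=0$ case is correct, just redundant.)
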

\begin{proof}
	We use the notation in \cite{lipshitz2019khovanov} without reintroducing it. 
	Since $S \cdot \zeta = 0$ for all embedded $2$-spheres $S$ in $Y$, the unrolled homology of $\CFhat(Y)$ with respect to $\zeta$ is nontrivial by \cite[Lemma 5.6]{lipshitz2019khovanov}. The $E^1$-page of the spectral sequence associated to the horizontal filtration on the unrolled complex of $\CFhat(Y)$ is the unrolled complex of $\HFhat(Y)$ viewed as an $\F_2[X]/X^2$-module with respect to $\zeta$. Since the $E^\infty$-page is nonzero, the $E^2$-page is also nonzero so $\HFhat(Y)$ is not a free $\F_2[X]/X^2$-module. 
\end{proof}

\begin{proof}[Proof of Theorem~\ref{thm:mainTheorem}]
	We first prove the result under the assumption that $(M,\gamma)$ is strongly-balanced. We then prove the general statement by reducing to this case. A balanced sutured manifold $(M,\gamma)$ is \textit{strongly-balanced} \cite[Definition 3.5]{Juh08} if for each component $F$ of $\partial M$, we have the equality $\chi(F \cap R_+(\gamma)) = \chi(F \cap R_-(\gamma))$. 

	Under the assumption that $(M,\gamma)$ is strongly-balanced, suppose there exists a $2$-sphere $S$ in $M$ for which $S \cdot \zeta$ is odd. There are two cases. \begin{enumerate}
		\item[(a)] The sphere $S$ is non-separating. 

		Then $(M,\gamma)$ is the connected sum of a strongly-balanced sutured manifold $(N,\beta)$ with $S^1 \x S^2$, where $S$ is a copy of $\pt \x S^2$ in the $S^1 \x S^2$ summand. If $\zeta = \zeta' \oplus \zeta''$ under the natural identification $H_1(M,\partial M) = H_1(N,\partial N) \oplus H_1(S^1 \x S^2)$, then $\zeta''$ is an odd multiple of a generator $\xi$ of $H_1(S^1 \x S^2)$ by the assumption that $S \cdot \zeta$ is odd. By Lemma~\ref{lem:connectSumMandY}, there is an isomorphism of $\F_2[X]/X^2$-modules \[
			\SFH(M,\gamma) \cong \SFH(N,\beta) \otimes_{\F_2} \SFH(S^1 \x S^2),
		\]where the actions of $X$ on the right-hand side is $X_{\zeta'} \otimes \Id + \Id \otimes\, X_{\zeta''}$. Since $\zeta'' - \xi$ is even, we know that $X_{\zeta''} = X_{\xi}$. By a direct computation, $\SFH(S^1 \x S^2) \cong \F_2[X]/X^2$ as an $\F_2[X]/X^2$-module with respect to the action of $\xi$. It follows that $\SFH(M,\gamma)$ is free by Lemma~\ref{lem:freenessOfTensorProduct}.
		\item[(b)] The sphere $S$ is separating. 

		Then $(M,\gamma)$ is the connected sum of sutured manifolds $(N_1,\beta_1)$ and $(N_2,\beta_2)$ along the sphere $S$. Since $(M,\gamma)$ is strongly-balanced, both $(N_1,\beta_1),(N_2,\beta_2)$ are as well. 
		By Lemma~\ref{lem:connectedSumN1andN2}, there is an isomorphism of $\F_2[X]/X^2$-modules \[
			\SFH(M,\gamma) \cong \SFH(N_1,\beta_1) \otimes_{\F_2} \SFH(N_2,\beta_2) \otimes_{\F_2} \SFH(S^3(2))
		\]where the action of $X$ on the right-hand side is given by \[
			X_{\zeta_1} \otimes \Id \otimes \Id + \Id \otimes\, X_{\zeta_2} \otimes \Id + \Id \otimes \Id \otimes\, X_\xi
		\]for classes $\zeta_i \in H_1(N_i,\partial N_i)$ and $\xi \in H_1(S^3(2),\partial S^3(2))$. Furthermore, by the assumption that $S \cdot \zeta$ is odd, Lemma~\ref{lem:connectedSumN1andN2} implies that $\SFH(S^3(2)) \cong \F_2[X]/X^2$ as modules with respect to the action of $\xi$. Thus $\SFH(M,\gamma)$ is a free $\F_2[X]/X^2$-module by Lemma~\ref{lem:freenessOfTensorProduct}. 
	\end{enumerate}

	Now assume that $S\cdot\zeta$ is even for every embedded $2$-sphere $S$ in $M$, where $(M,\gamma)$ is strongly-balanced. To show that $\SFH(M,\gamma)$ is not a free $\F_2[X]/X^2$-module with respect to the action of $\zeta$, we use Lemmas~\ref{lem:connectSumMandY}, \ref{lem:connectedSumN1andN2}, and \ref{lem:freenessOfTensorProduct} to reduce to the irreducible case, which is then handled by Lemmas~\ref{lem:tautImpliesNotFree} and \ref{lem:closedYnotFree}.
	Write $M$ as a connected sum \[
		M = N_1 \:\#\: \cdots \:\#\: N_k \:\#\: Y_1 \:\#\: \cdots \:\#\: Y_\ell \:\#\: (S^1 \x S^2)^{\#m}
	\]where the $N_i$ are irreducible compact $3$-manifolds with nonempty boundary and the $Y_i$ are irreducible closed $3$-manifolds. A quick way to see that such a decomposition exists uses the Grushko-Neumann theorem on the ranks of free products of finitely-generated groups and the Poincar\'e conjecture (for example, see \cite{MR142125}). The assumption that $(M,\gamma)$ is strongly-balanced implies $(N_i,\beta_i)$ is also strongly-balanced where $\beta_i$ are the sutures inherited from $\gamma$. Because $\SFH(M,\gamma) \neq 0$, it follows that $\SFH(N_i,\beta_i) \neq 0$ so $(N_i,\beta_i)$ is taut by \cite[Proposition 9.18]{Juh06}. Note that $S\cdot \zeta$ is even for each sphere along which the connected sums are formed and for each sphere of the form $\pt \x S^2$ in each $S^1 \x S^2$ summand. The result now follows from Lemmas~\ref{lem:connectSumMandY}, \ref{lem:connectedSumN1andN2}, \ref{lem:freenessOfTensorProduct}, \ref{lem:tautImpliesNotFree}, and \ref{lem:closedYnotFree}.

	We now reduce to the case that $(M,\gamma)$ is strongly-balanced. As explained in \cite[Remark 3.6]{Juh08}, we may construct a strongly-balanced sutured manifold $(M',\gamma')$ from a given balanced sutured manifold $(M,\gamma)$ so that there is a sequence of product disc decompositions from $(M',\gamma')$ to $(M,\gamma)$. To construct $(M',\gamma')$, we repeat the following procedure, which is sometimes referred to as a contact $1$-handle attachment: fix two components $F_1,F_2$ of $\partial M$ for which $\chi(F_i \cap R_+(\gamma)) \neq \chi(F_i \cap R_-(\gamma))$, choose discs $D_i$ centered at points on $s(\gamma) \cap F_i$, and identify $D_1$ with $D_2$ by an orientation-reversing map so that the sutures $s(\gamma) \cap D_1$ and $s(\gamma) \cap D_2$ are identified. Do this identification in such a way that the orientations of the sutures are reversed. The resulting manifold naturally inherits an orientation and sutures for which there is at least one fewer boundary component $F$ with $\chi(F \cap R_+(\gamma)) \neq \chi(F \cap R_-(\gamma))$. This reverse of this procedure is a product disc decomposition along $D_1 = D_2$. 

	Let $(M,\gamma)$ be a balanced sutured manifold, and let $\zeta \in H_1(M,\partial M)$. Construct a strongly-balanced sutured manifold $(M',\gamma')$ from $(M,\gamma)$ as explained. Let $z \subset M$ be a properly-embedded oriented $1$-manifold representing $\zeta$ which is disjoint from the discs on the boundary used in the construction of $(M',\gamma')$. Then $z$ represents a class $\zeta' \in H_1(M',\partial M')$ for which $i_*(\zeta') = \zeta$ under the sequence of product disc decompositions from $(M',\gamma')$ to $(M,\gamma)$. Note that $\SFH(M,\gamma)$ and $\SFH(M',\gamma')$ are isomorphic as $\F_2[X]/X^2$-modules with respect to the actions of $\zeta$ and $\zeta'$, respectively, by Lemma~\ref{lem:productDiscModuleIso}.

	If there is an embedded $2$-sphere $S$ in $M$ with $S \cdot \zeta$ odd, then the same $2$-sphere viewed in $M'$ also has $S \cdot \zeta'$ odd. Thus $\SFH(M',\gamma')$ and $\SFH(M,\gamma)$ are free. Conversely, if $\SFH(M,\gamma)$ is free, then $\SFH(M',\gamma')$ is free as well, so there is an embedded $2$-sphere $S'$ in $(M',\gamma')$ which intersects $z$ transversely in an odd number of points. By using an innermost argument, we may compress $S'$ along discs in the product discs of the sequence of decompositions from $(M',\gamma')$ to $(M,\gamma)$ to obtain a collection of embedded $2$-spheres $S_i$ in $M$ for which $\sum_i S_i \cdot \zeta = S' \cdot \zeta'$ is odd. Thus there is at least one $S_i$ for which $S_i\cdot \zeta$ is odd.
\end{proof}

\raggedright
\bibliography{FloerSplitLinks}
\bibliographystyle{alpha}

\vspace{10pt}

\textit{Department of Mathematics}

\textit{Harvard University}

\textit{Science Center, 1 Oxford Street}

\textit{Cambridge, MA 02138}

\textit{USA}

\vspace{10pt}

\textit{Email:} \texttt{jxwang@math.harvard.edu}

\end{document}